\documentclass[11pt]{article}

\date{April 23, 2026}

\PassOptionsToPackage{dvipsnames,svgnames,x11names}{xcolor}
\usepackage{xcolor}
\definecolor{labelkey}{rgb}{0,0.08,0.45}
\definecolor{refkey}{rgb}{0,0.6,0.0}
\definecolor{Brown}{rgb}{0.45,0.0,0.05}
\definecolor{lime}{rgb}{0.00,0.8,0.0}
\definecolor{lblue}{rgb}{0.5,0.5,0.99}
\definecolor{OliveGreen}{rgb}{0,0.6,0}
\definecolor{tyrianpurple}{rgb}{0.4, 0.01, 0.24}
\definecolor{myseagreen}{HTML}{3FBC9D}
\definecolor{myblue}{rgb}{0.9,0.9,0.98}
\colorlet{hlcyan}{cyan!30}

\usepackage{mathpazo} % With old-style figures and real smallcaps
\usepackage[T1]{fontenc}
\usepackage{mathtools}
\usepackage{amssymb}
\usepackage{stmaryrd}
\usepackage{empheq}

\usepackage{hyperref}
\hypersetup{
  colorlinks=true,
  linkcolor=refkey,
  urlcolor=lblue,
  citecolor=magenta
}

\usepackage{amsthm}
\makeatletter
\def\th@plain{%
  \thm@notefont{}%
  \itshape % body font
}
\def\th@definition{%
  \thm@notefont{}%
  \normalfont % body font
}

\usepackage[capitalize,nameinlink]{cleveref}
\makeatother
\newtheorem{theorem}{Theorem}[section]

\newtheorem{corollary}[theorem]{Corollary}
\newtheorem{proposition}[theorem]{Proposition}

\newtheorem{example}[theorem]{Example}

\newtheorem{algorithm}[theorem]{Algorithm}
\newtheorem{remark}[theorem]{Remark}
\crefname{theorem}{Theorem}{Theorems}
\Crefname{theorem}{Theorem}{Theorems}
\crefname{fact}{Fact}{facts}
\Crefname{fact}{Fact}{facts}
\Crefname{algorithm}{Algorithm}{algorithms}
\crefname{algorithm}{Algorithm}{algorithms}
\crefname{equation}{}{equations}
\crefname{chapter}{Appendix}{chapters}
\crefname{item}{}{items}
\crefname{enumi}{}{}

\usepackage{nicematrix}
\usepackage{booktabs}
\usepackage{siunitx}
\usepackage{tikz}
\usepackage[color=myseagreen]{todonotes}
\usepackage{soul}
\usepackage{nameref}
\usepackage{graphicx}
\usepackage{float}
\usepackage[shortlabels,inline]{enumitem}
\setlist[enumerate]{nosep}
\usepackage{relsize}

\usepackage[margin=1in,footskip=0.25in]{geometry}
\makeatletter

\let\orig@label\label

\renewcommand{\label}[1]{%
  \begingroup
  \def\@currentlabelname{}%
  \ifx\current@theorem\relax\else
    \def\@currentlabelname{\current@theorem}%
  \fi
  \ifx\cref@currentlabel\undefined\else
    \let\@currentlabelname\cref@currentlabel
  \fi
  \orig@label{#1}%
  \endgroup
}

\AtBeginEnvironment{theorem}{\def\current@theorem{theorem}}
\AtBeginEnvironment{proposition}{\def\current@theorem{proposition}}
\AtBeginEnvironment{fact}{\def\current@theorem{fact}}
\makeatother

\newcommand{\seppthree}{\setlength{\itemsep}{-3pt}}

\newcommand{\nnn}{\ensuremath{{n\in\mathbb{N}}}}
\newcommand{\kkk}{\ensuremath{{k\in\mathbb{N}}}}
\newcommand{\thalb}{\ensuremath{\tfrac{1}{2}}}
\newcommand{\menge}[2]{\big\{{#1}~\big|~{#2}\big\}}

\newcommand{\scal}[2]{\left\langle{#1},{#2}\right\rangle}

\newcommand{\exi}{\ensuremath{\exists\,}}

\newcommand{\RR}{\ensuremath{\mathbb{R}}}

\newcommand{\NN}{\ensuremath{\mathbb{N}}}

\newcommand{\ran}{\ensuremath{\operatorname{ran}\,}}

\newcommand{\pinf}{\ensuremath{+\infty}}

\providecommand{\norm}[1]{\lVert#1\rVert}

\author{
  Heinz H.\ Bauschke\thanks{
    Mathematics, University of British Columbia,
    Kelowna, B.C.\ V1V~1V7, Canada. E-mail: \texttt{heinz.bauschke@ubc.ca}.}
  ~~~and~~~
  Tran Thanh Tung\thanks{
    Mathematics, University of British Columbia,
    Kelowna, B.C.\ V1V~1V7, Canada. E-mail: \texttt{tung.tran@ubc.ca}.}
}

\title{\textsf{Finite Termination of a Generalized Perceptron Algorithm}}

\begin{document}

\maketitle

\begin{abstract}

Motivated by Ridgway’s proof of the perceptron algorithm, 
we study a simple subgradient method for convex inequality 
systems in Hilbert space.
Assuming strict feasibility and bounded subgradients, 
we establish finite termination for several natural step sizes. 
We also examine what can go wrong without strict feasibility: finite convergence may fail even for one function, 
and with several functions the method may converge to a point outside the feasible set. 
The linear setting recovers the classical perceptron algorithm.
\end{abstract}

{
\small
\noindent
{\bfseries 2020 Mathematics Subject Classification:}
{Primary 90C25; 
Secondary 47J25, 65K10, 90C30.}
}

\noindent
{\bfseries Keywords:}
convex feasibility problem, 
Hilbert space, 
perceptron algorithm,
Slater point, 
subgradient algorithm. 

\section{Introduction}

Throughout this paper,
\begin{subequations}
\label{e:standing}
\begin{align}
  &\text{$X$ is a real Hilbert space, with inner product $\scal{\cdot}{\cdot}$ and induced norm $\norm{\cdot}$.}
\end{align}
We also assume that 
\begin{align}
&\text{\textbf{(nonempty finite index set)}\quad $I = \{1,2,\ldots,m\}$;}\\  
&\text{\textbf{(full domain)}\quad $(\forall i\in I)$ $f_i\colon X\to\RR$ is convex and continuous;}\\
&\text{\textbf{(nonempty feasible set)}\quad $C := \menge{x\in X}{\text{$f_i(x)\leq 0$ for all $i\in I$}}\neq\varnothing$;}\\
&\text{\textbf{(bounded subgradients)}\quad
$(\exists L> 0)$ 
$\displaystyle \max_{i\in I}\sup\|\partial f_i(X)\| \leq L <\pinf$.} 
\label{e:bdsub}
\end{align}
\end{subequations}

We will usually also assume strict feasibility: 
\begin{align}
\label{e:Slater}
&\text{
\textbf{(Slater point)} \quad $(\exi s\in X)$ 
$\displaystyle \max_{i\in I} f_i(s) < 0$; hence, 
$\displaystyle \sigma := \min_{i\in I} -f_i(s) > 0$.
}
\end{align}

Motivated by Ridgway's proof of the perceptron algorithm 
presented in \cite{BlockLevin}, we consider the following algorithm, 
for finding a feasible point in the convex set $C$. 

\begin{algorithm}
\label{algo}
Given a sequence of step sizes $(\alpha_k)_{\kkk}$ that 
satisfies  
\begin{align}
\label{e:deltak}
(\forall\kkk) \quad 
\alpha_k \geq 0 \;\;\text{and}\;\;
\delta_k := \alpha_k(2\sigma - \alpha_k L^2)
\end{align}
and a starting point $x_0\in X$, the algorithm proceeds as follows: 
If $x_k \in C$, then we are done and we stop. 
Otherwise, we update\footnote{For background on convex analysis 
in Hilbert space, see, e.g., \cite{BC2017}.} 
\begin{equation}
\label{e:step}
x_{k+1} := x_k - \alpha_k g_k,\quad 
\text{where $g_k\in\partial f_{i_k}(x_k)$ and $i_k\in I$ is such that $f_{i_k}(x_k) > 0$.}
\end{equation}
\end{algorithm}

Our main result (see \cref{t:finite} below) 
asserts the \emph{finite} convergence\footnote{For different, 
but similar in spirit, finite-convergence results, see, e.g., 
\cite{BWWX}, \cite{CCP}, \cite{DePIus} and \cite{Fuku}.} of $(x_k)_\kkk$
to a point in $C$ provided that \cref{e:standing} and \cref{e:Slater} hold and each $\delta_k$ is nonnegative with $\sum_{k=0}^\infty\delta_k=+\infty$. 

The remainder of this paper is organized as follows. 
The proof of the main result is provided in \cref{s:2} where 
we also discuss \cref{algo} in the absence of a Slater point. 
The connection to perceptrons is presented in \cref{s:3}. 

\section{Main result} 

\label{s:2}

\begin{proposition}[one-step estimate]
\label{p:1step}
Suppose that \cref{e:standing} and \cref{e:Slater} hold, 
$x_k\notin C$, and $x_{k+1}$ is obtained from 
\cref{e:step}. 
Then 
\begin{equation}
\|x_{k+1}-s\|^2 \leq \|x_k-s\|^2 - \delta_k. 
\end{equation}
\end{proposition}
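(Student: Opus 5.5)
We expand the square directly. Since $x_{k+1}-s = (x_k-s)-\alpha_k g_k$, we have
\begin{equation*}
\|x_{k+1}-s\|^2 = \|x_k-s\|^2 - 2\alpha_k\scal{g_k}{x_k-s} + \alpha_k^2\|g_k\|^2 .
\end{equation*}
The task is therefore to bound the cross term from below and the last term from above. The last term is immediate: $g_k\in\partial f_{i_k}(x_k)$, so the bounded subgradient assumption \cref{e:bdsub} gives $\|g_k\|\leq L$, and hence $\alpha_k^2\|g_k\|^2\leq \alpha_k^2L^2$.

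For the cross term we use the subgradient inequality of $f_{i_k}$ at $x_k$, evaluated at the Slater point $s$:
\begin{equation*}
f_{i_k}(s)\geq f_{i_k}(x_k)+\scal{g_k}{s-x_k}.
\end{equation*}
Rearranging gives $\scal{g_k}{x_k-s}\geq f_{i_k}(x_k)-f_{i_k}(s)$. The index was chosen so that $f_{i_k}(x_k)>0$. By \cref{e:Slater}, $-f_{i_k}(s)\geq\sigma$. Together these yield $\scal{g_k}{x_k-s}>\sigma$, and in particular $\scal{g_k}{x_k-s}\geq\sigma$. Since $\alpha_k\geq 0$, we conclude $-2\alpha_k\scal{g_k}{x_k-s}\leq -2\alpha_k\sigma$.

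Combining the two bounds gives
\begin{equation*}
\|x_{k+1}-s\|^2\leq\|x_k-s\|^2-2\alpha_k\sigma+\alpha_k^2L^2=\|x_k-s\|^2-\delta_k,
\end{equation*}
which is the claim by \cref{e:deltak}. There is no real obstacle here; the only point needing care is the sign bookkeeping. The nonnegativity $\alpha_k\geq0$ is exactly what allows the inner-product bound to be multiplied through. Also, $x_k\notin C$ is used only to guarantee that an index $i_k$ with $f_{i_k}(x_k)>0$ exists, so that \cref{e:step} is well defined.
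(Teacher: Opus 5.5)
Your proof is correct and follows the paper's argument essentially line for line. You expand $\|(x_k-s)-\alpha_k g_k\|^2$, bound the cross term via the subgradient inequality at the Slater point $s$ (giving $\scal{x_k-s}{g_k}>\sigma$), and bound $\|g_k\|\leq L$; you also make explicit the role of $\alpha_k\geq 0$, which the paper leaves implicit.
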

\begin{proof}
Because $g_k\in\partial f_{i_k}(x_k)$, 
the subgradient inequality 
$f_{i_k}(x_k) + \scal{s-x_k}{g_k} \leq f_{i_k}(s)$ 
re-arranges to 
\begin{equation}
\label{e:slsubg}
\scal{x_k-s}{g_k} \geq f_{i_k}(x_k) - f_{i_k}(s) > \sigma > 0.
\end{equation}
It follows that 
\begin{align*}
\|x_{k+1}-s\|^2
&= 
\|(x_k-\alpha_kg_k)-s\|^2
= \|(x_k-s)-\alpha_kg_k\|^2\\
&= \|x_k-s\|^2 -2\alpha_k\scal{x_k-s}{g_k} + \alpha_k^2\|g_k\|^2
\\ 
&\leq 
\|x_k-s\|^2 - 2\alpha_k\sigma + \alpha_k^2 L^2,
\end{align*}
which yields the desired estimate.
\end{proof}

\begin{proposition}
\label{p:notermi}
Suppose that \cref{e:standing} and \cref{e:Slater} hold, 
each $\delta_k\geq 0$, and 
\cref{algo}
does not terminate after finitely many steps. 
Then
\begin{equation}
\sum_{k=0}^\infty \delta_k < +\infty.
\end{equation}
\end{proposition}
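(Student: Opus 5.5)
The plan is a telescoping argument built on \cref{p:1step}. Since \cref{algo} does not terminate, for every $k\in\NN$ we have $x_k\notin C$, so $x_{k+1}$ is produced by \cref{e:step}. \cref{p:1step} therefore applies at every index and gives
\begin{equation*}
(\forall\kkk)\quad \delta_k \leq \|x_k-s\|^2 - \|x_{k+1}-s\|^2 .
\end{equation*}

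Next, I fix $n\in\NN$ and sum this inequality over $k=0,\ldots,n$. The right-hand side telescopes, so
\begin{equation*}
\sum_{k=0}^{n}\delta_k \leq \|x_0-s\|^2 - \|x_{n+1}-s\|^2 \leq \|x_0-s\|^2 .
\end{equation*}

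Because each $\delta_k\geq 0$, the partial sums are nondecreasing. They are also bounded above by the constant $\|x_0-s\|^2$, which is independent of $n$. Letting $n\to\infty$ then yields $\sum_{k=0}^\infty\delta_k\leq\|x_0-s\|^2<+\infty$.

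I do not expect a genuine obstacle. The only point needing care is that \cref{p:1step} is applicable at every step, and this is exactly what non-termination provides. Nonnegativity of the $\delta_k$ is used only so that the infinite series is well defined as the limit of monotone partial sums. In fact, we even get the explicit bound $\|x_0-s\|^2$.
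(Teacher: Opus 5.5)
Your proposal is correct and is essentially the paper's proof. Non-termination makes \cref{p:1step} applicable at every index, the sum telescopes to $\sum_{k=0}^n\delta_k\leq\|x_0-s\|^2$, and nonnegativity of the $\delta_k$ lets you pass to the limit.
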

\begin{proof}
By \cref{p:1step}, we have $(\forall\kkk)$ 
$0\leq \delta_k \leq \|x_k-s\|^2 - \|x_{k+1}-s\|^2$. 
Hence $(\forall\nnn)$ $\sum_{k=0}^n \delta_k \leq \|x_0-s\|^2 - \|x_{n+1}-s\|^2 
\leq \|x_0-s\|^2$. 
\end{proof}

We are ready for our main result: 

\begin{theorem}[finite termination of \cref{algo}]
\label{t:finite}
Suppose that \cref{e:standing} and \cref{e:Slater} hold, 
each $\delta_k\geq 0$,
\begin{equation}
 \sum_{k=0}^\infty \delta_k = +\infty. 
\end{equation}
Then \cref{algo}
finds a point in $C$ after finitely many steps\footnote{A second inspection of the proof reveals that \cref{e:bdsub} can be replaced by 
$(\exists L>0)$ $\sup_\kkk\|g_k\|\leq L<+\infty$.}. 
\end{theorem}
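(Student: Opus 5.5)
The plan is a proof by contradiction that uses \cref{p:notermi} directly. Suppose \cref{algo} does not find a point in $C$ after finitely many steps. Then at every iteration $x_k\notin C$. Hence some index $i_k$ with $f_{i_k}(x_k)>0$ exists, and, since $f_{i_k}$ is convex and continuous on all of $X$, the subdifferential $\partial f_{i_k}(x_k)$ is nonempty. So the update \cref{e:step} is well defined for every $\kkk$, and the algorithm produces an infinite sequence $(x_k)_\kkk$ with $x_k\notin C$ for all $k$.

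In this situation the hypotheses of \cref{p:notermi} are met: \cref{e:standing} and \cref{e:Slater} hold, each $\delta_k\geq 0$, and the algorithm does not terminate. The proposition therefore gives $\sum_{k=0}^\infty\delta_k<+\infty$, which contradicts the assumption $\sum_{k=0}^\infty\delta_k=+\infty$. Consequently there is some $k$ with $x_k\in C$, and the algorithm stops there with a feasible point.

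There is no real obstacle here, since the telescoping estimate of \cref{p:1step} and \cref{p:notermi} carries all the work. The only point to check is that the algorithm is well defined as long as it has not stopped, which is exactly the nonemptiness of the subdifferential of a continuous convex function.

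For the footnote's remark, note that \cref{e:bdsub} enters only through the bound $\|g_k\|\leq L$ in \cref{p:1step}. Hence the weaker assumption $\sup_\kkk\|g_k\|\leq L$ suffices for the same argument.
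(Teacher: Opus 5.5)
Your proposal is correct and follows the paper's proof, which simply takes the contrapositive of \cref{p:notermi}. Your extra check that each update is well defined (a continuous convex function on $X$ has nonempty subdifferential) and your justification of the footnote (the bound $\|g_k\|\leq L$ is used only in \cref{p:1step}) are both accurate.
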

\begin{proof}
This is the contrapositive of \cref{p:notermi}.
\end{proof}

\begin{corollary}[finite termination with constant step size]
\label{c:constfin}
Suppose that \cref{e:standing} and \cref{e:Slater} hold, 
and 
\begin{equation}
\alpha_k \equiv \alpha, \quad 
\text{where } 
0<\alpha<\frac{2\sigma}{L^2}.
\end{equation}
Then \cref{algo}
finds a point in $C$ after finitely many steps. 
\end{corollary}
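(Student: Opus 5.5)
The plan is to verify that the hypotheses of \cref{t:finite} hold for the constant step size $\alpha_k\equiv\alpha$, and then invoke that theorem directly.

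First, by \cref{e:deltak}, the constant choice gives
\begin{equation*}
(\forall\kkk)\quad \delta_k = \alpha(2\sigma-\alpha L^2) =: \delta .
\end{equation*}
The standing assumptions give $\sigma>0$ from \cref{e:Slater} and $L>0$ from \cref{e:bdsub}, so the interval $\left]0,2\sigma/L^2\right[$ is nonempty. Any $\alpha$ in it satisfies $\alpha>0$. Moreover, $\alpha<2\sigma/L^2$ is equivalent to $\alpha L^2<2\sigma$, that is, $2\sigma-\alpha L^2>0$. Hence $\delta>0$ as a product of two positive numbers. In particular each $\alpha_k\geq 0$ and each $\delta_k\geq 0$, as required.

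Next, since $\delta$ is a strictly positive constant, the partial sums satisfy $\sum_{k=0}^n\delta_k=(n+1)\delta\to+\infty$. Therefore $\sum_{k=0}^\infty\delta_k=+\infty$. All hypotheses of \cref{t:finite} are now met, and that theorem yields that \cref{algo} finds a point in $C$ after finitely many steps.

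There is no genuine obstacle. The only point needing care is the strict inequality $\alpha<2\sigma/L^2$, which is what makes $\delta$ strictly positive and hence the series divergent; at the endpoint $\alpha=2\sigma/L^2$ we would get $\delta=0$ and the argument would fail. As an optional quantitative remark, \cref{p:notermi}'s bound $\sum_{k=0}^n\delta_k\leq\|x_0-s\|^2$ shows that the number of updates is at most $\|x_0-s\|^2/\delta$.
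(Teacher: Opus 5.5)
Your proof is correct and is the same as the paper's: you note that $\delta_k\equiv\alpha(2\sigma-\alpha L^2)>0$, so $\sum_k\delta_k=+\infty$, and then apply \cref{t:finite}. Your added bound of at most $\|x_0-s\|^2/\delta$ updates follows from the telescoping estimate in the proof of \cref{p:notermi}, and it is also valid.
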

\begin{proof}
  Note that 
$\delta_k \equiv \alpha(2\sigma - \alpha L^2) > 0$. 
Now apply \cref{t:finite}.
\end{proof}

In practice, 
one might know that $\sigma$ and $L$ exist, but one might not know 
their actual values --- in that case, the following result is useful:

\begin{corollary}
\label{c:classic}
Suppose that \cref{e:standing} and \cref{e:Slater} hold, 
and the nonnegative sequence $(\alpha_k)_\kkk$ of step sizes satisfies 
\begin{equation}
\label{e:classic}
\sum_{k=0}^\infty \alpha_k=+\infty 
\quad\text{and}\quad 
\sum_{k=0}^\infty \alpha_k^2<+\infty.
\end{equation}
(For instance, we could pick $(\alpha_k)_\kkk = (1/(k+1))_\kkk$.)
Then \cref{algo}
finds a point in $C$ after finitely many steps. 
\end{corollary}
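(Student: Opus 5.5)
Throughout, $\delta_k=\alpha_k(2\sigma-\alpha_kL^2)$ as in \cref{e:deltak}. We cannot apply \cref{t:finite} directly, because it assumes every $\delta_k\geq 0$, and that may fail for the first few indices. The plan is to show that $\alpha_k\to 0$ forces $\delta_k\geq 0$ from some index on, and that the tail of $\sum_k\delta_k$ diverges. We then apply \cref{t:finite} to the algorithm restarted at the appropriate iterate.

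\textbf{Step 1.} From $\sum_k\alpha_k^2<+\infty$ we get $\alpha_k\to 0$. Hence there is $K\in\NN$ with $\alpha_kL^2\leq\sigma$ for all $k\geq K$. For these $k$,
\begin{equation*}
\delta_k=\alpha_k(2\sigma-\alpha_kL^2)\geq \sigma\alpha_k\geq 0 .
\end{equation*}
Since $\sum_{k\geq K}\alpha_k=+\infty$, it follows that $\sum_{k\geq K}\delta_k=+\infty$. Alternatively, one can write $\sum\delta_k=2\sigma\sum\alpha_k-L^2\sum\alpha_k^2$, where the second series converges.

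\textbf{Step 2.} Suppose the algorithm stops at some step $k\leq K$. Then we are done, since it only stops at a point of $C$. Otherwise $x_K$ is defined and not in $C$, and the iterates $(x_k)_{k\geq K}$ are exactly those produced by \cref{algo} with starting point $x_K$ and step sizes $(\alpha_{K+j})_{j\in\NN}$. The index choices $i_k$ and subgradients $g_k$ are kept as they were.

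For this shifted sequence every $\delta$ is nonnegative and the $\delta$'s sum to $+\infty$. Therefore \cref{t:finite} applies and gives termination after finitely many further steps at a point of $C$. Equivalently, if the algorithm never terminated, the telescoping argument of \cref{p:notermi} (built on \cref{p:1step}) applied from index $K$ on would give $\sum_{k\geq K}\delta_k\leq\|x_K-s\|^2<+\infty$, which contradicts Step 1.

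The only point needing care is the first one: the initial negative $\delta_k$ rule out a direct citation of \cref{t:finite}. The shift in Step 2 handles this. Everything else is routine.
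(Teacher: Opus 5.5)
Your proposal is correct and matches the paper's proof: the paper also picks $k_0$ with $\delta_k\geq 0$ for all $k\geq k_0$ and $\sum_{k\geq k_0}\delta_k=+\infty$, then applies \cref{t:finite} starting from $x_{k_0}$. Your bound $\delta_k\geq\sigma\alpha_k$ once $\alpha_kL^2\leq\sigma$ just fills in the detail the paper compresses into ``the assumptions imply.''
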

\begin{proof}
The assumptions imply that there exists $k_0\in\NN$ such that 
$(\forall k\geq k_0)$ $\delta_k\geq 0$ and 
$\sum_{k\geq k_0}\delta_k=+\infty$. 
The result thus follows from \cref{t:finite} (with starting point $x_{k_0}$). 
\end{proof}

If $m=1$, i.e., $I=\{1\}$, and there is no Slater point, then one can 
at least guarantee weak (but not necessarily finite) 
convergence in \cref{c:classic}:

\begin{remark}[no Slater point and only one function yields 
weak convergence]
\label{r:classic}
Suppose that \cref{e:standing} holds with $m=1$, but 
$\min f_1(X)=0$ (so Slater's condition \cref{e:Slater} fails). 
Furthermore, assume that the sequence of nonnegative step sizes $(\alpha_k)_\kkk$ satisfies
\cref{e:classic}. 
Then \cref{algo}
produces a sequence 
$(x_k)_\kkk$ that converges weakly to a point in $C$. 
Indeed, set $(\forall \kkk)$
$\eta_k := \max\{1,\|g_k\|\}$ and $\widetilde{\alpha_k} := \alpha_k{\color{red}/}\eta_k$. 
Then the result follows from 
Alber, Iusem, and Solodov's \cite[Theorem~1]{AIS} 
(applied with $(\widetilde{\alpha}_k)_\kkk$). 

However, \emph{finite convergence is no longer guaranteed}: 
Suppose that $X=\RR$ and that $f_1=H$ is the 
\emph{Huber function}
$H(x)=\thalb x^2$ if $|x|\leq 1$; 
$H(x)= |x|-\thalb$ if $1\leq|x|$. 
Here $C=\{0\}$ but 
no Slater point exists! Consider 
$(\alpha_{k})_\kkk = (1/(k+2))_\kkk$ and $x_0=1$. 
Then $(x_k)_\kkk = (1/(k+1))_\kkk$ converges to $0\in C$; 
however, the convergence is not finite.
\end{remark}

In contrast, if $m\geq 2$ and there is no Slater point, then 
\cref{algo} in the context of \cref{c:classic} fails spectacularly: 

\begin{example}[no Slater and two functions may fail to yield a solution]
\label{ex:noSl2}
Suppose that $X=\RR$, $m=2$, and 
\begin{equation}
f_1\colon X\to\RR\colon x\mapsto 
\begin{cases}
0, &\text{if $x\leq 0$;}\\
\thalb x^2, &\text{if $0\leq x\leq 1$;}\\
x-\thalb, &\text{if $1\leq x$}
\end{cases}
\end{equation}
is a \emph{truncated} Huber function. 
As in \cref{r:classic}, we pick 
$(\alpha_{k})_\kkk = (1/(k+2))_\kkk$ and $x_0=1$ so that 
$(x_k)_\kkk = (1/(k+1))_\kkk$ converges to $0\in C$. 
Note that $\ran\partial f_1 = [0,1]$, 
so we can and do assume that $L=1$. 
Now suppose that\footnote{We have considerable flexibility in picking $f_2$: indeed, any convex continuous function $f_2$ with 
$f_2(0)>0$ and for which 
$(\exists x<0)$ $f_2(x)\leq 0$ will do the trick!} $f_2(x)=x+1$. 
Then $(x_k)_\kkk$ is a valid incarnation of \cref{algo}. 
However, the limit of $(x_k)_\kkk$ satisfies $f_2(0)=1>0$ and 
so $0\notin C$.
\end{example}

\begin{remark} 
In \cref{ex:noSl2}, \cref{algo} does not work because
it picked the ``wrong'' subgradient for the violation encountered. 
If one sticks with the gradient from the most violated constraint, 
then that particular method converges weakly, again by applying 
\cite[Theorem~1]{AIS} to $\max_{i\in I}f_i$. 
\end{remark}

\section{Perceptron setting}

\label{s:3}

Now assume that 
\begin{equation}
(\forall i\in I) \quad 
f_i = \scal{\cdot}{-a_i}.
\end{equation}
Note that 
\begin{equation}
\nabla f_i(x) =-a_i, 
\end{equation}
and so the bounded subgradient assumption holds with 
\begin{equation}
L := \max_{i\in I} \|a_i\| < +\infty.
\end{equation}
Assume that $z\in X$ is a strict feasible solution, i.e., 
\begin{equation}
(\forall i\in I) \quad
\scal{z}{a_i} > 0.
\end{equation}
Set 
\begin{equation}
\mu := \min_{i\in I} \scal{z}{a_i} > 0.
\end{equation}
Then 
\begin{equation}
(\forall \rho>0) \quad
s_\rho := \rho\frac{L^2}{\mu} z
\end{equation}
is also a Slater point. Moreover, 
because 
$(\forall i\in I)$ 
$-f_i(s_\rho)= \scal{s_\rho}{a_i} = \rho\frac{L^2}{\mu}\scal{z}{a_i}$, 
we have 
\begin{equation}
\sigma_\rho := \min_{i\in I} -f_i(s_\rho) = \rho L^2.
\end{equation}
This gives us massive flexibility for picking the 
sequence of step sizes! 
For instance, if we wish to work with a constant step size, 
then by \cref{c:constfin} we 
can pick any step size $\alpha$ such that
\begin{equation}
\label{e:260423b}
0<\alpha < \frac{2\sigma_\rho}{L^2} = 2\rho.
\end{equation}
Because $\rho$ can be chosen arbitrarily large, 
it means that we can pick \emph{any} $\alpha>0$ in \cref{c:constfin}! 

The above flexibility, however, relies crucially on the Slater's condition. The following example shows that, when there is no Slater point, for any fixed $\alpha>0$, there exists an initial point for which the iterates fail to converge:
\begin{example}[no Slater and two functions may fail to yield convergence]
\label{ex:noSp}
Suppose that $X=\RR$, $m=2$, $a_1=1$, and $a_2=-1$. Then $C=\{0\}$, and Slater's condition \cref{e:Slater} fails. For a fixed stepsize $\alpha>0$, and assuming $x_k \neq 0$, the next iterate is given by
\begin{equation}\label{260423a}
x_{k+1} =
\begin{cases}
x_k - \alpha, & \text{if } x_k > 0;\\
x_k + \alpha, & \text{if } x_k < 0.
\end{cases}
\end{equation}
Now let $x_0 \in \left]0,\alpha\right[$. 
It follows from \cref{260423a} that, for all $k\in\mathbb{N}$,
\begin{equation}
\begin{cases}
x_{2k} = x_0,\\
x_{2k+1} = x_0 - \alpha.
\end{cases}
\end{equation}
Hence, the sequence $(x_k)_{k\in\mathbb{N}}$ is periodic and does not converge.
\end{example}

Returning to \cref{e:260423b}, we note that 
for $\rho=1$, we can pick $\alpha=1$ and 
hence recover the classical perceptron algorithm,
with W.C.\ Ridgway's proof as presented\footnote{Unfortunately, Ridgway's name is misspelled in \cite{BlockLevin}. See also \cite{Ridgway}.} in 
Block and Levin's \cite[Section~2]{BlockLevin}:
\begin{equation}
\label{percep}
x_{k+1} = x_k + a_{i_{k}}. 
\end{equation}
For a classical proof that the algorithm finds a point in $C$ after finitely many steps see \cite[Theorem~11.1]{MP}. 
If $C=\varnothing$, then the sequence generated by \cref{percep} stays 
bounded (see \cite[Theorem~1]{BlockLevin}). 
It would be interesting to find out what can be said for 
the much more general \cref{algo}.

\subsection*{Statements and Declarations}

\noindent\textbf{Acknowledgments.} We thank Dr.~Yair Censor for bringing \cite{BlockLevin} to our attention.

\noindent\textbf{Funding.} The research of HHB was partially supported by a Discovery Grant from the Natural Sciences and Engineering Research Council of Canada.

\noindent\textbf{Data availability.} 
No datasets were generated or analyzed for the research described in this article.

\noindent\textbf{Conflict of interest.} The authors declare that there is no conflict of interest in the publication of this paper.

\noindent\textbf{Ethical approval.} This article does not contain any studies with human participants or animals performed by any of the authors.

\end{document}